\newtheorem{thm}{Theorem} [section]
\newtheorem{lem}[thm]{Lemma}
\newtheorem{prop}[thm]{Proposition}
\theoremstyle{definition}
\theoremstyle{remark}
\newtheorem{rem}[thm]{Remark}
\numberwithin{equation}{section}
\begin{document}

\newcommand{\thmref}[1]{Theorem~\ref{#1}}
\newcommand{\secref}[1]{Section~\ref{#1}}
\newcommand{\lemref}[1]{Lemma~\ref{#1}}
\newcommand{\propref}[1]{Proposition~\ref{#1}}
\newcommand{\corref}[1]{Corollary~\ref{#1}}
\newcommand{\remref}[1]{Remark~\ref{#1}}
\newcommand{\eqnref}[1]{(\ref{#1})}
\newcommand{\exref}[1]{Example~\ref{#1}}

\newcommand{\nc}{\newcommand}
 \nc{\Z}{{\mathbb Z}}
 \nc{\C}{{\mathbb C}}
 \nc{\N}{{\mathbb N}}
  \nc{\Q}{\mathbb{Q}}
 \nc{\la}{\lambda}
 \nc{\ep}{\epsilon}
 \nc{\n}{\mf n} 
 \nc{\La}{\Lambda}
  \nc{\V}{\mf V}
 \nc{\bi}{\bibitem}
 \nc{\E}{\mc E}
 \nc{\ba}{\tilde{\pa}}
 \nc{\half}{\frac{1}{2}}
 \nc{\hgt}{\text{ht}}
 \nc{\mc}{\mathcal}
 \nc{\mf}{\mathfrak} 
 \nc{\hf}{\frac{1}{2}}
 \nc{\hgl}{\widehat{\mathfrak{gl}}}
 \nc{\gl}{{\mathfrak{gl}}}
 \nc{\so}{{\mathfrak{so}}}
 \nc{\hz}{\hf+\Z}
 
\nc{\OO}{\mc{O}}
\nc{\Oi}{\mathcal O^{\text{int}}}

\nc{\ov}{\overline}
\nc{\ul}{\underline}
\nc{\wt}{\widetilde}
\nc{\I}{\mathbb{I}}
\nc{\X}{\mathbb{X}}
\nc{\Y}{\mathbb{Y}} 

\nc{\A}{\mathcal A}
\nc{\aA}{{}_\A}
\nc{\B}{\bold B}
\nc{\ff}{{\textbf f}}
\nc{\aff}{{}_\A\ff}
\nc{\Qq}{{\mathbb Q}(q)}
\nc{\dM}{{}^\omega M}
\nc{\dL}{{}^\omega L}
\nc{\aM}{{}_\A M}
\nc{\aL}{{}_\A L}
\nc{\daM}{{}^\omega_\A M}
\nc{\daL}{{}^\omega_\A L}
\nc{\htt}{\text{tr }}
\nc{\lwm}{{}_{\text{lw}}M}
\nc{\hwm}{{}^{\text{hw}}M}
\nc{\U}{\bold U}
\nc{\Udot}{\dot{\U}}
\nc{\UdotA}{{}_\A \dot{\U}}
\nc{\Uq}{{\mathcal U}_q}

 \nc{\be}{e}
 \nc{\bff}{f}
 \nc{\bk}{k}
 \nc{\bt}{t}
\nc{\id}{\text{id}}
\nc{\Ihf}{\I^\imath}
\nc{\one}{\bold{1}}

\newcommand{\blue}[1]{{\color{blue}#1}}
\newcommand{\red}[1]{{\color{red}#1}}
\newcommand{\green}[1]{{\color{green}#1}}
\newcommand{\white}[1]{{\color{white}#1}}

\title[Canonical bases in tensor products revisited ]
{Canonical bases in tensor products revisited}  
 
 \author[Huanchen Bao]{Huanchen Bao}
\address{Department of Mathematics, University of Virginia, Charlottesville, VA 22904}
\email{hb4tb@virginia.edu (Bao), \quad ww9c@virginia.edu (Wang)}

\author[Weiqiang Wang]{Weiqiang Wang}

\begin{abstract}
We construct canonical bases in  tensor products of several lowest and highest weight  integrable modules, 
generalizing Lusztig's  work. 
\end{abstract}


\maketitle

\let\thefootnote\relax\footnotetext{{\em 2010 Mathematics Subject Classification.} Primary 20G42. Secondary 17B37.}


\section{Introduction}

Lusztig \cite{Lu92} constructed the canonical basis in a tensor product 
of a lowest weight and a highest weight integrable module (
denoted by $\lwm \otimes \hwm$),
but not on $\hwm \otimes \hwm$ over a general quantum group $\U$. 
In \cite{Lu93}, he generalized this construction by defining the notion of based modules, but was only able to fully develop
this theory in finite type;  as a consequence, he constructed the canonical bases for tensor products of  several (finite-dimensional) modules for $\U$ of finite type.
These canonical bases have important applications to category $\mc O$, categorification, and quantum topology.

The goal of this paper is  
to provide a very simple {\em algebraic} construction of canonical bases in  tensor products \eqref{eq:Tlh} of 
several lowest weight integrable modules followed by  
highest weight integrable modules
over $\U$ of {\em Kac-Moody type} --- this settles a basic problem, open since Lusztig's work more than two decades ago.
We do so by extending the essential parts of   \cite[Chapter 27]{Lu93} 
on based modules  to Kac-Moody setting. 

 Zheng \cite{Z08} and Webster  \cite{We13} have categorified  tensor products of highest weight integrable modules over $\U$ of Kac-Moody type.
Moreover Webster \cite{We12} has further categorified the more general tensor products of the form \eqref{eq:Tlh}, 
building on  works of
Khovanov, Lauda, Rouquier, Vasserot, and Varagnolo \cite{KL09, R08, VV11}.  
The unavailability of an algebraic construction of tensor product canonical bases in full generality 
has been puzzling and hence our work helps to fill a gap in the program of categorification.
In general it is a difficult and deep problem to decide whether
the {\em can$\tiny \oplus$nical basis} (which is defined to be the basis of projective indecomposables) coincides with the canonical basis. 
In the setting of $\hwm \otimes \ldots \otimes \hwm$ over $\U$ of symmetric type, the classes of projective indecomposable modules
in Webster's category do coincide with the canonical basis constructed in this paper
(see Theorem~\ref{th:PIM=CB}; Webster \cite{We12}  proved this for finite $ADE$ types).
A can$\tiny \oplus$nical basis  
comes with  positivity 
but is extremely difficult to compute, while  an (algebraic) canonical basis  
is   
computable by the Gram-Schmidt algorithm.  
  
The quasi-$\mc R$-matrix $\Theta$, a variant of Drinfeld's 
universal $\mc R$-matrix \cite{Dr86}, was introduced by Lusztig to define a bar involution on tensor product modules. 
The key step in our approach is a simple proof that $\Theta$ preserves the $\Z[q,q^{-1}]$-forms of modules
such as  $\lwm \otimes M$ or  $M \otimes \hwm$ for any based module $M$.
Our argument bypasses the integrality issue of the quasi-$\mc R$-matrix $\Theta$ (which was only known in finite type
\cite[24.1.6]{Lu93}) and  simultaneously gets around the cyclicity of $\lwm \otimes \hwm$ used in \cite[23.3.6]{Lu93}.
In this way, we show that $\lwm \otimes M$ and  $M \otimes \hwm$ are based modules, and
this leads to the canonical bases of the tensor product modules  \eqref{eq:Tlh} inductively.

\section{Canonical bases in tensor products}

We will follow the notation and convention of the book \cite{Lu93} unless otherwise specified. 
Throughout we shall assume that the root system is $Y$-regular.
\subsection{Approximate cyclicity}

For $\la \in X$,
let $M(\la)$ be the  Verma module and $L(\la)$ be the highest weight simple module of highest weight $\la$
of a quantum group $\U$. 
We identify the underlying vector space for $M(\la)$ as $\ff$ \cite[1.2]{Lu93} with highest weight vector identified with $1\in \ff$.
Let $\B$ be the canonical basis of $\ff$. We identify $\ff$ with $\U^-$ via the isomorphism $\ff \rightarrow \U^-$, $b\mapsto b^-$, 
and denote by $\B^-$ the  canonical basis in $\U^-$.

A based $\U$-module in this paper is a $\U$-module which satisfies Conditions (a)-(d) in \cite[27.1.2]{Lu93} 
and is integrable (the integrability here replaces the finite-dimensionality condition in {\em loc. cit.}).
A basic example of based modules is $L(\la)$ for $\la \in X^+$ with its 
canonical basis of Lusztig and Kashiwara \cite{Lu90, Ka91, Lu93}. 
Let $(M,B)$ and $(M',B')$ be two based $\U$-modules  whose associated bar involutions will both be denoted by $\overline{\phantom{r}}$.
The  new (anti-linear) bar map $\Psi$ on $M\otimes M'$ given by $\Psi(m\otimes m') =\Theta (\bar{m} \otimes \bar{m}')$ makes sense, if 
$\Theta: M \otimes M' \rightarrow M \otimes M'$ is well defined 
(that is, $\Theta (m\otimes m')$ is always a finite sum for all $m\in M, m'\in M'$).

\begin{rem}
 \label{rem:well}
Since $\Theta$ lies in a completion of $\U^-\otimes \U^+$ by \cite[Theorem ~4.1.2]{Lu93},
the map $\Theta: M \otimes M' \rightarrow M \otimes M'$ is well defined if the following condition is satisfied:
$$
(\star)\;\; xm \otimes x'm'=0, \forall m\in M, m'\in M',  \forall x \in \U^-_{\nu}, x' \in \U^+_\nu \text{ with } \nu \text{ sufficiently large.}
$$ 
In particular, the condition ($\star$) is satisfied when $M_1 =\dL(\la)$ {\em or}  $M_2 =L(\la)$, for $\la \in X^+$. 
\end{rem}

Let $\A=\Z[q,q^{-1}]$. We have the $\A$-submodules $\aA\ff$ and $\aA\U^-$ in $\ff$ and $\U^-$ generated by $\B$
and $\B^-$, respectively \cite{Lu93}.  For a based module $(M, B)$, we let $\aM$ be the $\A$-submodule of $M$
generated by $B$. Recall \cite[Chapter 23]{Lu93} $\Udot$ is the modified quantum group containing various idempotents ${\bf 1}_\la$ for
$\la \in X$.

\begin{lem}
\label{lem:surjA1}Let $(M, B(M))$ be a based $\U$-module and let   $\la \in X$.
\begin{enumerate}
\item
For $b \in B(M)$, the $\Qq$-linear map 
$
\pi_b: \U^- {\bf 1}_{|b|+\la} \longrightarrow M \otimes M(\la),  \;
u \mapsto u(b \otimes 1),
$
restricts to an $\A$-linear map
$\pi_b: \aA \U^- {\bf 1}_{|b|+\la} \longrightarrow \aA M \otimes_\A \aM(\la).$

\item
We have $\sum_{b \in B(M)}  \pi_b (\aA \U^- {\bf 1}_{|b|+\la})= \aA M \otimes_\A \aM(\la)$.
\end{enumerate}
\end{lem}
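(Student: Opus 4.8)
The plan is to reduce the statement to two facts from \cite{Lu93}: the integral form $\aA\U^-$ is generated as an $\A$-algebra by the divided powers $\theta_i^{(n)}$ (equivalently, $\aM(\la)=\aff$ is $\A$-spanned by the vectors $\theta_{i_1}^{(n_1)}\cdots\theta_{i_k}^{(n_k)}\cdot 1$), and, since $(M,B(M))$ is a based module, $\aA M$ is stable under all divided powers $E_i^{(n)},F_i^{(n)}$ \cite[27.1.2]{Lu93}. I also use the standard expansion $\Delta(F_i^{(n)})=\sum_{s+t=n}q_i^{-st}F_i^{(s)}\tilde{K}_i^{-s}\otimes F_i^{(t)}$ (in Lusztig's conventions, \cite[\S3.1]{Lu93}); its unique term with $s=0$ is $\tilde{K}_i^{-n}\otimes F_i^{(n)}$, and since the root datum is $Y$-regular each $\tilde{K}_i$ acts on a weight vector by an integral power of $q$, hence by a unit of $\A$.

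For part (1): $b\otimes 1\in\aA M\otimes_\A\aM(\la)$, and $\aA\U^-{\bf 1}_{|b|+\la}$ is obtained from ${\bf 1}_{|b|+\la}$ inside $\UdotA$ by iterated left multiplication by the $F_i^{(n)}$; since $\pi_b$ intertwines these left multiplications with the diagonal action on $M\otimes M(\la)$, it suffices to check that $\aA M\otimes_\A\aM(\la)$ is stable under each $\Delta(F_i^{(n)})$. Applying the expansion above to a pure tensor $m\otimes(w\cdot 1)$ with $m\in\aA M$, $w\in\aff$, the $(s,t)$-summand is a unit of $\A$ times $(F_i^{(s)}\tilde{K}_i^{-s}m)\otimes(\theta_i^{(t)}w\cdot 1)$; the first factor lies in $\aA M$ (stability under $F_i^{(s)}$ and $\tilde{K}_i$ a unit) and the second in $\aff=\aM(\la)$ ($\aff$ being a subalgebra containing $\theta_i^{(t)}$). $\A$-linearity of the restriction is automatic since $\pi_b$ is $\Qq$-linear.

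For part (2): the inclusion $\subseteq$ is part (1). Write $\mathcal S:=\sum_{b\in B(M)}\pi_b\!\big(\aA\U^-{\bf 1}_{|b|+\la}\big)$; then $\mathcal S$ is stable under the diagonal divided powers, because $F_i^{(n)}\cdot\big(u(b\otimes 1)\big)=(F_i^{(n)}u)(b\otimes 1)$ in $\UdotA$ with $F_i^{(n)}u\in\aA\U^-$. As $\aM(\la)=\aff$ is $\A$-spanned by the $w\cdot 1$ for $w$ a monomial in the $\theta_i^{(n)}$, it is enough to show $b\otimes(w\cdot 1)\in\mathcal S$ for all $b\in B(M)$ and all such $w$, which I prove by induction on $\hgt\,|w|$. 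For $|w|=0$, $b\otimes 1=\pi_b({\bf 1}_{|b|+\la})\in\mathcal S$. For $|w|>0$, write $w=\theta_i^{(m)}w'$ with $m\ge 1$, so $\hgt\,|w'|<\hgt\,|w|$, and expand $F_i^{(m)}\cdot\big(b\otimes(w'\cdot 1)\big)=\Delta(F_i^{(m)})\big(b\otimes(w'\cdot 1)\big)$: the $s=0$ summand is a unit of $\A$ times $b\otimes(w\cdot 1)$, while each $s\ge 1$ summand, after expanding $F_i^{(s)}\tilde{K}_i^{-s}b$ in the basis $B(M)$, is an $\A$-combination of vectors $b''\otimes(w''\cdot 1)$ with $\hgt\,|w''|=\hgt\,|w|-s<\hgt\,|w|$, hence in $\mathcal S$ by induction; and $b\otimes(w'\cdot 1)\in\mathcal S$ by induction, so $F_i^{(m)}\cdot\big(b\otimes(w'\cdot 1)\big)\in\mathcal S$. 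Solving for $b\otimes(w\cdot 1)$ (the leading coefficient being a unit) finishes the induction, hence the proof.

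The one genuine point is the triangularity in part (2): one must extract from the coproduct expansion that $\Delta(F_i^{(m)})\big(b\otimes(w'\cdot 1)\big)$ is a unit multiple of $b\otimes(w\cdot 1)$ plus terms strictly lower in the well-founded order given by the height of the $M(\la)$-factor, while simultaneously checking those terms stay inside $\aA M\otimes_\A\aM(\la)$ --- precisely where stability of $\aA M$ under divided powers and integrality of the $\tilde{K}_i$-eigenvalues are used. Everything else is routine bookkeeping with integral forms and the identity $F_i^{(n)}\cdot\big(u(b\otimes 1)\big)=(F_i^{(n)}u)(b\otimes 1)$ in $\UdotA$.
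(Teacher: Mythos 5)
Your proof is correct and follows essentially the same route as the paper's: both arguments rest on the triangularity of the coproduct of $\U^-$ with respect to the height of the $M(\la)$-factor (leading term a unit multiple of $b\otimes w$, correction terms of strictly smaller height that stay in the integral lattice by stability of $\aA M$ under divided powers), followed by induction on that height; the paper merely phrases this via the coproducts of canonical basis elements ${b'}^-\in\B^-$ and the filtration $\aff_{\le N}$ instead of via monomials in the $\theta_i^{(n)}$. One cosmetic slip: with the expansion you wrote, the $s=0$ term of $\Delta(F_i^{(n)})$ is $1\otimes F_i^{(n)}$ rather than $\tilde{K}_i^{-n}\otimes F_i^{(n)}$, but since either acts on a weight vector by a unit of $\A$ (by $Y$-regularity) this does not affect the argument.
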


\begin{proof}
Let $b' \in \B$. Using the comultiplication $\Delta$ in \cite[3.1.4]{Lu93}, we can write 
\begin{equation}
\label{eq:Db--}
\Delta ({b'}^-) =1\otimes {b'}^- + \sum c_{b_1, b_2} b_1^-  \otimes b_2^-
\in {}_\A \U^- {}_\A \U^0 \otimes  {}_\A \U^-,
\end{equation}
where the sum is taken over $b_1, b_2 \in \B$ such that $\htt |b_1| \leq \htt |b'|, \htt |b_2| <\htt |b'|$, and $c_{b_1, b_2} \in \A$; 
see \cite[1.1-1.2]{Lu93} for notations.
Then
\begin{align}
\pi_b ({b'}^- {\bf 1}_{|b| +\la}) = {b'}^- {\bf 1}_{|b| +\la} (b \otimes 1) 
 &=  b \otimes b' + \sum c_{b_1, b_2}  b_1^- b \otimes b_2.
 \label{eq:b+-2}
\end{align}
Part (1) follows. 

Note that $\aff$ has an increasing filtration 
$$
\A = \aff_{\le 0} \subseteq \aff_{\le 1} \subseteq \cdots \subseteq \aff_{\le N} \subseteq \cdots
$$
where $\aff_{\le N}$ denotes the $\A$-span of $\{\theta_{i_1}^{(a_1)}\ldots \theta_{i_n}^{(a_n)}   
| a_1+\ldots + a_n\le N, i_1, \ldots, i_n \in I\}.$
This induces an  increasing filtration on $\aM(\la)$. 

Set $Z := \sum_{b \in B(M)}  \pi_b ( \aA \U^- {\bf 1}_{|b| +\la})$. We have by (1) that $Z\subseteq \aM \otimes_\A \aM(\la)$.
To prove (2), it suffices to prove by induction on $N$ that $\aM \otimes_\A \aM(\la)_{\le N}   \subseteq Z$,
with the base case $N=0$ covered by definition.

Let $b' \in\aff_{\le N}$, for $N\ge 1$. Recalling notations from \eqref{eq:Db--},
we have $\sum c_{b_1, b_2} b_1^-  b \otimes b_2 \in   \aM \otimes_\A \aM(\la)_{\le N-1}$,
which lies in $Z$ by the inductive assumption. Since the left-hand side of \eqref{eq:b+-2} 
lies in $Z$ by Part~ (1), we have by \eqref{eq:b+-2} that $b \otimes b' \in Z$.
Letting $b' \in \aff_{\le N}$ and $b \in B(M)$ vary, we conclude that $\aM  \otimes_\A \aM(\la)_{\le N}   \subseteq Z$.  
Part (2)  is proved. 
\end{proof}

For $\la \in X^+$, we denote by $\eta_\la$ the image of $1$ under the projection $p_\la: M(\la) \rightarrow L(\la)$. 
Note that $p_\la$ restricts to $p_\la: \aM(\la) \rightarrow \aL(\la)$. The next lemma follows from Lemma~\ref{lem:surjA1}. 

\begin{lem}
\label{lem:surjA2}
Let $\la \in X^+$, and let $(M, B(M))$ be a based $\U$-module. 
\begin{enumerate}
\item
For $b \in B(M)$, the $\Qq$-linear map 
$
\pi_b: \U^- {\bf 1}_{|b|+\la} \longrightarrow M \otimes L(\la),  
\; u \mapsto u(b \otimes \eta_\la)$, restricts to an $\A$-linear map
$\pi_b: \aA \U^- {\bf 1}_{|b|+\la} \longrightarrow \aA M \otimes_\A \aL(\la).$

\item
We have $\sum_{b \in B(M)}  \pi_b (\aA \U^- {\bf 1}_{|b|+\la})= \aA M \otimes_\A \aL(\la)$.
\end{enumerate}
\end{lem}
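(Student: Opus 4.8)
The plan is to deduce Lemma~\ref{lem:surjA2} directly from Lemma~\ref{lem:surjA1} by pushing everything through the projection $p_\la \colon M(\la) \rightarrow L(\la)$. The crucial observation is that the composite $\id_M \otimes p_\la \colon M \otimes M(\la) \rightarrow M \otimes L(\la)$ is a morphism of $\U$-modules, so it intertwines the two $\U^-$-actions; concretely, for $b \in B(M)$ one has $(\id_M \otimes p_\la)\bigl(u(b \otimes 1)\bigr) = u\bigl(b \otimes p_\la(1)\bigr) = u(b \otimes \eta_\la)$, i.e. $(\id_M \otimes p_\la) \circ \pi_b^{M(\la)} = \pi_b^{L(\la)}$ as maps $\U^- {\bf 1}_{|b|+\la} \rightarrow M \otimes L(\la)$. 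This reduces both parts to checking that $\id_M \otimes p_\la$ behaves well with respect to the $\A$-forms.

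For part (1): by Lemma~\ref{lem:surjA1}(1), $\pi_b^{M(\la)}$ restricts to an $\A$-map ${}_\A\U^- {\bf 1}_{|b|+\la} \rightarrow {}_\A M \otimes_\A {}_\A M(\la)$. Since $p_\la$ restricts to $p_\la \colon {}_\A M(\la) \rightarrow {}_\A L(\la)$ (noted in the excerpt just before the lemma statement), the map $\id_{{}_\A M} \otimes p_\la$ sends ${}_\A M \otimes_\A {}_\A M(\la)$ into ${}_\A M \otimes_\A {}_\A L(\la)$. Composing, $\pi_b^{L(\la)} = (\id_M \otimes p_\la) \circ \pi_b^{M(\la)}$ restricts to an $\A$-linear map ${}_\A \U^- {\bf 1}_{|b|+\la} \rightarrow {}_\A M \otimes_\A {}_\A L(\la)$, which is exactly part (1).

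For part (2): apply $\id_M \otimes p_\la$ to the identity in Lemma~\ref{lem:surjA1}(2). On the one hand, $\sum_{b} \pi_b^{L(\la)}({}_\A\U^- {\bf 1}_{|b|+\la}) = \sum_b (\id_M \otimes p_\la)\bigl(\pi_b^{M(\la)}({}_\A\U^- {\bf 1}_{|b|+\la})\bigr) = (\id_M \otimes p_\la)\bigl(\sum_b \pi_b^{M(\la)}({}_\A\U^- {\bf 1}_{|b|+\la})\bigr) = (\id_M \otimes p_\la)({}_\A M \otimes_\A {}_\A M(\la))$. On the other hand, since $p_\la \colon {}_\A M(\la) \twoheadrightarrow {}_\A L(\la)$ is surjective (the canonical basis of $L(\la)$ is the image under $p_\la$ of a subset of the canonical basis of $M(\la)$, the rest going to $0$), the map $\id_{{}_\A M} \otimes p_\la \colon {}_\A M \otimes_\A {}_\A M(\la) \rightarrow {}_\A M \otimes_\A {}_\A L(\la)$ is surjective as well (right exactness of $\otimes_\A$). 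Hence the left-hand side equals all of ${}_\A M \otimes_\A {}_\A L(\la)$, giving part (2).

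I do not anticipate a serious obstacle here: the only points requiring a line of justification are that $\id_M \otimes p_\la$ is $\U$-linear (so it commutes with the $\U^-$-action used to define the $\pi_b$'s) and that it is surjective on the integral forms, both of which are immediate from the corresponding facts about $p_\la$ recalled just above the statement. The mild subtlety worth a word is that $\eta_\la = p_\la(1)$ is precisely what makes $\pi_b^{L(\la)}$ the composite of $\pi_b^{M(\la)}$ with $\id_M \otimes p_\la$, so the two versions of $\pi_b$ match on the nose rather than merely up to the filtration; once that is observed, parts (1) and (2) transport along the surjection with no extra work.
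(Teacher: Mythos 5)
Your argument is correct and is precisely the route the paper intends: the paper simply notes that $p_\la$ restricts to $p_\la\colon \aM(\la) \to \aL(\la)$ and states that the lemma "follows from Lemma~\ref{lem:surjA1}," and your write-up supplies exactly the missing details (the identity $\pi_b^{L(\la)} = (\id_M \otimes p_\la)\circ \pi_b^{M(\la)}$ and surjectivity of $\id_M \otimes p_\la$ on the $\A$-forms). No discrepancies to report.
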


The above lemmas provide us a key tool to approximate and get around the cyclicity of the tensor product 
of a lowest weight integrable module and a highest weight integrable module in \cite[23.3.6, 23.3.8]{Lu93}.

\subsection{Quasi-$\mc R$-matrix  and $\A$-forms}

The quasi-$\mc R$-matrix $\Theta$ induces a well-defined $\Qq$-linear map
$$
\Theta: M \otimes L(\la) \longrightarrow M \otimes L(\la),
$$ 
for $\la \in X^+$ and any weight module $M$; cf. \cite[24.1.1]{Lu93}.
The following is a generalization of \cite[Proposition~ 24.1.4, Corollary 24.1.5]{Lu93},
where Lusztig deals with the tensor product of a lowest weight  module and a highest weight module.

\begin{prop}
\label{prop:ThetaAM1}
Let $\la\in X^+$ and let $(M,B(M))$ be a based $\U$-module.
Then the $\Qq$-linear map $\Theta: M \otimes L(\la) \longrightarrow M \otimes L(\la)$ preserves the $\A$-submodule $\aM \otimes_\A \aL(\la)$. 
\end{prop}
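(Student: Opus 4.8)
The plan is to exploit Lemma~\ref{lem:surjA2}, which presents the $\A$-form $\aM \otimes_\A \aL(\la)$ as the sum of the images $\pi_b(\aA\U^-{\bf 1}_{|b|+\la})$ over $b \in B(M)$. It therefore suffices to show that $\Theta$ maps each $\pi_b(\aA\U^-{\bf 1}_{|b|+\la})$ into $\aM \otimes_\A \aL(\la)$; equivalently, that $\Theta\big(u(b\otimes\eta_\la)\big) \in \aM\otimes_\A\aL(\la)$ for every $u \in \aA\U^-{\bf 1}_{|b|+\la}$ and every $b\in B(M)$. First I would record the basic intertwining property of the quasi-$\mc R$-matrix from \cite[Theorem~4.1.2]{Lu93}: $\Theta$ conjugates the comultiplication $\Delta$ into the opposite comultiplication $\bar\Delta$, i.e. $\Delta(u)\,\Theta = \Theta\,\bar\Delta(u)$ as operators on $M\otimes L(\la)$ for all $u\in\U$; this is exactly the identity Lusztig uses in the proof of \cite[Proposition~24.1.4]{Lu93}.

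The key reduction is then: $\Theta\big(u(b\otimes\eta_\la)\big) = \bar{u}^{\text{op}}\cdot \Theta(b\otimes\eta_\la)$, where $\bar{u}^{\text{op}}$ denotes the action through the opposite comultiplication after applying the appropriate (anti)automorphism bookkeeping — precisely as in \cite[24.1.4]{Lu93}. Since $\aA\U^- \cdot (\aM\otimes_\A\aL(\la)) \subseteq \aM\otimes_\A\aL(\la)$ (the $\A$-form is stable under the divided powers $E_i^{(n)}, F_i^{(n)}$ acting via either comultiplication, by the defining properties (a)–(d) of a based module together with the standard integrality of the $\U$-action on $\aL(\la)$), the whole problem collapses to the single statement
\[
\Theta(b\otimes\eta_\la) \in \aM \otimes_\A \aL(\la), \qquad \forall\, b\in B(M).
\]
Now I would compute this directly: writing $\Theta = \sum_{\nu}\Theta_\nu$ with $\Theta_\nu \in \U^-_\nu\otimes\U^+_\nu$ having $\A$-integral expansion $\Theta_\nu = \sum (\text{coeff in }\A)\, x_\nu^- \otimes x_\nu^+$, we get $\Theta(b\otimes\eta_\la) = \sum_\nu \sum (\text{coeff})\, x_\nu^- b \otimes x_\nu^+\eta_\la$. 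Here $x_\nu^+\eta_\la = 0$ unless $\nu$ is a sum of simple roots bounded by $\la$, so the sum is genuinely finite, and the crucial point is that $x_\nu^+\eta_\la \in \aL(\la)$ while $x_\nu^- b \in \aM$ — but the individual $x_\nu^\pm$ need not be integral, only $\Theta_\nu$ as a whole is. This is the one subtlety, and it is resolved exactly as in \cite[Corollary~24.1.5]{Lu93}: one uses that $\Theta_\nu$ lies in ${}_\A\U^-_\nu \otimes {}_\A\U^+_\nu$ after all — more precisely, $(\psi\otimes\psi)(\Theta)$ has the required integrality — or, more robustly in our Kac–Moody setting where the integrality of $\Theta$ is not available a priori, one invokes Lemma~\ref{lem:surjA2} a second time in the form: the operator $\Theta$ commutes with the bar-type structure and the already-established $\A$-integrality of $\pi_b$ forces $\Theta(b\otimes\eta_\la)$, being $\pi_b(\Theta|_{\ff}(1))$ up to the $\bar\Delta$-action, to land in $Z = \aM\otimes_\A\aL(\la)$.

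The main obstacle is precisely this last point — showing $\Theta(b\otimes\eta_\la)\in\aM\otimes_\A\aL(\la)$ without assuming the (unknown, in Kac–Moody type) integrality of $\Theta$ itself. The paper's stated innovation is to \emph{bypass} that integrality; so the real proof must extract the integrality of the relevant \emph{partial} action of $\Theta$ from the surjectivity statement in Lemma~\ref{lem:surjA2} rather than from a global property of $\Theta$. Concretely, I expect the argument to run: $b\otimes\eta_\la = \pi_b(1\cdot{\bf 1}_{|b|+\la})$ lies in $\aM\otimes_\A\aL(\la)$; the element $\Theta(b\otimes\eta_\la)$ has the same weight and, by the intertwining identity, generates (under $\aA\U^-$ acting via $\Delta$) the same submodule that $b\otimes\eta_\la$ generates via $\bar\Delta$; combining with the fact that $\Delta$ and $\bar\Delta$ define the same $\A$-form on $\aM\otimes_\A\aL(\la)$ (both divided-power actions preserve it) yields the claim. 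The bookkeeping of weights to guarantee finiteness is routine given Remark~\ref{rem:well}.
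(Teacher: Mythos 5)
Your overall strategy --- present the lattice via Lemma~\ref{lem:surjA2} as $\sum_b \pi_b(\aA\U^-{\bf 1}_{|b|+\la})$, use the intertwining property of $\Theta$ to move the $\Udot$-action past $\Theta$, and reduce to the single vector $b\otimes\eta_\la$ --- is exactly the paper's. But two steps need repair. First, the ``key reduction'' $\Theta\big(u(b\otimes\eta_\la)\big)=\bar u^{\mathrm{op}}\cdot\Theta(b\otimes\eta_\la)$ is not correct as written: the identity $u\,\Theta(m\otimes m')=\Theta\big(\overline{\bar u(\bar m\otimes\bar m')}\big)$ evaluated at the bar-fixed vectors $m=b$, $m'=\eta_\la$ gives
$$\Theta\big(\,\overline{\bar u(b\otimes\eta_\la)}\,\big)=u\big(\Theta(b\otimes\eta_\la)\big)=u(b\otimes\eta_\la),$$
i.e.\ $\Theta$ simplifies on the \emph{bar-conjugates} $\overline{\bar u(b\otimes\eta_\la)}$, not on $u(b\otimes\eta_\la)$ itself. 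The fix is easy but must be said: since $\overline{\phantom{r}}\otimes\overline{\phantom{r}}$ preserves $\aM\otimes_\A\aL(\la)$ and the bar involution preserves $\aA\U^-{\bf 1}_{|b|+\la}$, every $x$ in the lattice can be written as $\sum_i\overline{\bar u_i(b_i\otimes\eta_\la)}$ with $u_i$ integral, whence $\Theta(x)=\sum_i u_i(b_i\otimes\eta_\la)=\sum_i\pi_{b_i}(u_i)$ lies in the lattice by Lemma~\ref{lem:surjA2}(1). That is the paper's proof verbatim.

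Second, the paragraph about the integrality of the components $\Theta_\nu$ is a red herring, and leaning on it would be fatal: as Remark~\ref{rem:affine} records, that integrality fails already in affine type $A_1^{(1)}$, and the whole point of the paper is to bypass it. Fortunately nothing of the sort is needed here: since $\Theta$ lies in a completion of $\U^-\otimes\U^+$ and $\eta_\la$ is a highest weight vector, $\U^+_\nu\eta_\la=0$ for \emph{every} $\nu\neq 0$ (not merely for $\nu$ large relative to $\la$), so $\Theta(b\otimes\eta_\la)=b\otimes\eta_\la$ on the nose and no integrality of $\Theta$ enters anywhere. Your closing paragraph does gesture at the correct mechanism (the $\Delta$- and $\bar\Delta$-generated presentations of the lattice agree because the lattice is bar-stable), so with the bar inserted and the detour deleted your proposal coincides with the paper's argument.
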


\begin{proof}
As usual, we write $\overline{\phantom{u}}$ for $\overline{\phantom{r}} \otimes \overline{\phantom{r}}$ on $M \otimes L(\la)$, which clearly preserves
the $\A$-lattice $\aM \otimes_\A \aL (\la)$. 
Let $x\in \aM \otimes_\A \aL(\la)$. Then $x=\ov{x'}$ for some $x' \in \aM \otimes_\A \aL(\la)$. 
By Lemma~\ref{lem:surjA1}, we can write $x' = \sum_i \pi_{b_i} (u'_i)$ (a finite sum), for some $b_i\in B(M)$ and $u'_i \in \aA\U^- {\bf 1}_{|b_i|+\la}$. 
Since $\aA\U^- {\bf 1}_{|b_i|+\la}$ is preserved by the bar involution on $\Udot$, we have $u'_i =\bar{u}_i$ for some $u_i \in \aA\U^- {\bf 1}_{|b_i|+\la}$.
Hence 
\begin{equation}
\label{eq:xx'2}
x=\ov{x'} =\sum_i \ov{\bar{u}_i ( b_i \otimes \eta_\la)}.
\end{equation} 

We now recall a general property of the quasi-$\mc R$-matrix $\Theta$ \cite[Lemma 24.1.2]{Lu93}:
$$
u \Theta (m \otimes m') =\Theta (\overline{\bar{u}(\bar{m} \otimes \bar{m}')}),
$$
for $u \in \Udot, m \in M, m' \in L(\la)$.
Taking $m=b_i =\overline{b}_i$ and $m'=\eta_\la=\overline{\eta_\la}$, we have
\begin{equation}
 \label{eq:uTh2}
 u(b_i \otimes \eta_\la) =\Theta (\overline{\bar{u}(b_i \otimes \eta_\la)} ), \qquad \forall u\in \Udot,
\end{equation}
since  $\Theta (b_i \otimes \eta_\la) =b_i \otimes \eta_\la$ (which follows
from that $\Theta$ lies in a completion of $\U^-\otimes \U^+$ \cite[4.1.2]{Lu93}). 
By \eqref{eq:xx'2} and \eqref{eq:uTh2}, we have 
$$
\Theta(x) = \sum_i \Theta(\ov{\bar{u}_i (b_i \otimes \eta_\la)}) = \sum_i u_i(b_i \otimes \eta_\la) =
\sum_i \pi_{b_i} (u_i),
$$
where the latter lies in $\aM \otimes_\A \aL(\la)$ by Lemma~\ref{lem:surjA1}. 
The proposition is proved. 
\end{proof}

\begin{rem}
The same argument as above shows that $\Theta: M \otimes M(\la) \rightarrow M \otimes M(\la)$
preserves the $\A$-submodule $\aM \otimes \aM(\la)$, for each $\la \in X$.
\end{rem}


Recall from \cite[3.1.3]{Lu93} the automorphism $\omega$ of the $\Qq$-algebra $\U$. By twisting,
any $\U$-module $M$ gives rise to another $\U$-module ${}^\omega M$ with the same underlying vector space as $M$.
In particular, $\dM(\la)$ is the lowest weight Verma module and $\dL(\la)$ is the lowest weight simple module. 
The following dual statement to Proposition~\ref{prop:ThetaAM1} can be proved in a similar way
by first establishing dual versions of Lemmas~\ref{lem:surjA1} and \ref{lem:surjA2}.

\begin{prop}
\label{prop:ThetaAM2}
Let $\la\in X^+$ and let $M$ be a based $\U$-module.
Then the $\Qq$-linear map $\Theta: \dL(\la) \otimes M \longrightarrow \dL(\la) \otimes M$ preserves the $\A$-submodule $\daL(\la) \otimes_\A \aM$. 
\end{prop}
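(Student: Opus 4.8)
The plan is to mirror the proof of Proposition~\ref{prop:ThetaAM1} after transporting everything through the automorphism $\omega$, which exchanges lowest and highest weight behaviour. First I would set up the $\omega$-twisted analogues of Lemmas~\ref{lem:surjA1} and \ref{lem:surjA2}: for a based module $M$ and $\la \in X^+$, the map $\pi_b^\omega : \U^+ {\bf 1}_{\la - |b|} \to \dL(\la)\otimes M$, $u \mapsto u(\xi_{-\la}\otimes b)$ (where $\xi_{-\la}$ is the lowest weight generator of $\dL(\la)$) should restrict to an $\A$-linear map ${}_\A\U^+ {\bf 1}_{\la-|b|} \to \daL(\la)\otimes_\A \aM$, and the images over all $b\in B(M)$ should span $\daL(\la)\otimes_\A \aM$. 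The proof is the same computation as before using the comultiplication formula \eqref{eq:Db--}, except that one applies $\omega$ to turn $\Delta$ on $\U^-$ into the corresponding coproduct on $\U^+$; concretely, $\Delta(e_{b'}) = e_{b'}\otimes {\bf 1} + \sum c_{b_1,b_2}\, e_{b_1}\otimes e_{b_2}$ with the analogous height constraints, so that $\pi_b^\omega(e_{b'}{\bf 1}_{\la-|b|}) = \xi_{-\la}\otimes b' + (\text{lower filtration terms})$, and the induction on the PBW-type filtration of ${}_\A\U^+$ goes through verbatim.

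Given these dual lemmas, the argument is formally identical to the proof of Proposition~\ref{prop:ThetaAM1}. The bar involution $\overline{\phantom{r}}\otimes\overline{\phantom{r}}$ on $\dL(\la)\otimes M$ preserves $\daL(\la)\otimes_\A\aM$. Taking $x$ in this $\A$-lattice, write $x = \overline{x'}$ and use the dual Lemma to express $x' = \sum_i \pi_{b_i}^\omega(u_i')$ with $b_i\in B(M)$ and $u_i' \in {}_\A\U^+{\bf 1}_{\la-|b_i|}$; since ${}_\A\U^+{\bf 1}_{\la-|b_i|}$ is stable under the bar involution on $\Udot$, we have $u_i' = \overline{u_i}$. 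Then use Lusztig's general identity $u\,\Theta(m\otimes m') = \Theta(\overline{\bar u(\bar m\otimes \bar m')})$ with $m = \xi_{-\la} = \overline{\xi_{-\la}}$ and $m' = b_i = \overline{b_i}$, together with $\Theta(\xi_{-\la}\otimes b_i) = \xi_{-\la}\otimes b_i$ (again because $\Theta$ lies in a completion of $\U^-\otimes\U^+$ and $\xi_{-\la}$ is a lowest weight vector, so $x'\xi_{-\la} = 0$ for $x'\in\U^+_\nu$, $\nu\neq 0$), to conclude $\Theta(x) = \sum_i u_i(\xi_{-\la}\otimes b_i) = \sum_i \pi_{b_i}^\omega(u_i) \in \daL(\la)\otimes_\A\aM$.

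The one point deserving care — and the only genuine obstacle — is bookkeeping with $\omega$: one must check that $\omega$ interacts correctly with the $\A$-forms, the canonical basis, the comultiplication, and the idempotents ${\bf 1}_\la$, i.e. that $\omega$ sends ${}_\A\U^-$ to ${}_\A\U^+$, matches the canonical bases up to the expected normalization, and sends ${\bf 1}_\la$ to ${\bf 1}_{-\la}$ (so that weights flip signs consistently and the index $|b|+\la$ becomes $\la - |b|$ on the twisted side). Since $\omega$ is an algebra automorphism defined over $\Qq$ that preserves $\B$ up to the standard identifications \cite[Chapter 23]{Lu93}, and since $\Theta$ transforms predictably under $\omega\otimes\omega$ (see \cite[4.1.3]{Lu93}), this is routine; once it is in place, every line of the proof of Proposition~\ref{prop:ThetaAM1} dualizes without change. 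I would therefore state: \emph{Apply the automorphism $\omega$ to reduce to Proposition~\ref{prop:ThetaAM1}; equivalently, repeat its proof using the $\omega$-twisted versions of Lemmas~\ref{lem:surjA1} and \ref{lem:surjA2} established above.}
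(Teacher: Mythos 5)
Your proposal is correct and takes essentially the same route as the paper, which itself only records that the proposition ``can be proved in a similar way by first establishing dual versions of Lemmas~\ref{lem:surjA1} and \ref{lem:surjA2}''; your $\omega$-twisted maps $\pi_b^\omega$ built on $\U^+$ acting on $\xi_{-\la}\otimes b$, followed by a verbatim repetition of the argument of Proposition~\ref{prop:ThetaAM1}, are exactly that. Three bookkeeping slips should be fixed: the idempotent must be ${\bf 1}_{|b|-\la}$ (the weight of $\xi_{-\la}\otimes b$), not ${\bf 1}_{\la-|b|}$; the leading term of $\pi_b^\omega\bigl((b')^+{\bf 1}_{|b|-\la}\bigr)$ is $(b')^+\xi_{-\la}\otimes b$ --- it is the $\dL(\la)$-factor that is swept and filtered in the induction, with $b$ fixed in the second slot --- not $\xi_{-\la}\otimes b'$; and $\Theta(\xi_{-\la}\otimes b)=\xi_{-\la}\otimes b$ holds because the \emph{first} tensor factor of each $\Theta_\nu$ with $\nu\neq 0$ lies in $\U^-$ and kills the lowest weight vector $\xi_{-\la}$, not because $\U^+$ kills it.
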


\subsection{Tensor product canonical bases}
 \label{sec:tensor}

Note that \cite[27.3.1]{Lu93} remains valid in Kac-Moody setting, 
conditional on that $\Theta: M \otimes M' \rightarrow M \otimes M'$ is well defined (cf.~ Remark~\ref{rem:well})
and that it preserves the $\A$-submodule $\aM \otimes \aM'$, 
for two based modules $(M,B), (M',B')$. 
Recall that $( (B, B'), <)$ is naturally a partially ordered set \cite[27.3.1]{Lu93}. 
A  highest (respectively, lowest) weight integrable module with its canonical basis is a based module \cite{Lu90, Ka91, Lu93}. 
We have the following generalization of \cite[Theorem 27.3.2]{Lu93} in the Kac-Moody setting.

\begin{thm}
\label{th:CB12}
Let $(M,B), (M',B')$ be two based modules, with either $M =\dL(\la)$ or $M' =L(\la)$ for $\la \in X^+$. 
Let $\mc L$ be the $\Z[q^{-1}]$-submodule of $M\otimes M'$ generated by $B\otimes B'$.

\begin{enumerate}
\item
For any $(b, b')\in B\times B'$, there is a unique element $b\diamondsuit b'\in \mc L$ such that 
$\Psi(b\diamondsuit b') =b\diamondsuit b'$ and $b\diamondsuit b' -b\otimes b' \in q^{-1} \mc L$.

\item
The element $b\diamondsuit b'$ is equal to $b\otimes b'$ plus a $q^{-1} \Z[q^{-1}]$-linear combination of
elements $b_2\otimes b_2'$ with $(b_2,b_2') \in B\times B'$ with $(b_2,b_2') <(b,b')$. 

\item
The elements $b\diamondsuit b'$ with $(b,b') \in B\times B'$ form a $\Qq$-basis of $M\otimes M'$,
an $\A$-basis of $\A \otimes_{\Z[q^{-1}]} \mc L$, and a $\Z[q^{-1}]$-basis of $\mc L$. 
\end{enumerate}
\end{thm}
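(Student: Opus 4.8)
The plan is to mimic the proof of \cite[Theorem 27.3.2]{Lu93}, now that the two hypotheses that make \cite[27.3.1]{Lu93} work in the Kac--Moody setting have been secured: the well-definedness of $\Theta$ on $M\otimes M'$ (Remark~\ref{rem:well}, using $M=\dL(\la)$ or $M'=L(\la)$) and the fact that $\Theta$ preserves $\aM\otimes_\A\aM'$ (Proposition~\ref{prop:ThetaAM1} when $M'=L(\la)$, Proposition~\ref{prop:ThetaAM2} when $M=\dL(\la)$). The first step is to check that $\Psi$ is a well-defined anti-linear involution on $M\otimes M'$: anti-linearity is clear from $\Psi(m\otimes m')=\Theta(\bar m\otimes\bar m')$, and $\Psi^2=\id$ follows from the identity $\bar\Theta\,\Theta=1$ (the defining property of the quasi-$\mc R$-matrix, \cite[Theorem 4.1.2]{Lu93}), exactly as in \cite[24.1.4]{Lu93}. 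Crucially, $\Psi$ preserves the $\A$-lattice $\aM\otimes_\A\aM'$: the componentwise bar $\overline{\phantom{r}}\otimes\overline{\phantom{r}}$ preserves it since $B,B'$ are bar-invariant bases, and $\Theta$ preserves it by the two propositions; hence $\Psi$ does too.

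Next I would record the triangularity of $\Psi$ with respect to the poset $((B,B'),<)$, which is the combinatorial heart of \cite[27.3.1, 27.3.2]{Lu93}. One writes $\Psi(b\otimes b')=\sum_{(b_1,b_1')}r_{(b_1,b_1'),(b,b')}\,b_1\otimes b_1'$ with $r\in\A$, and shows $r_{(b,b'),(b,b')}=1$ and $r_{(b_1,b_1'),(b,b')}=0$ unless $(b_1,b_1')\le(b,b')$; this uses that $\Theta\in 1\otimes 1 + \sum_{\nu>0}\U^-_\nu\otimes\U^+_\nu$ together with the triangularity of the comultiplication on canonical basis elements (the same filtration estimate $\htt|b_1|\le\htt|b'|$, $\htt|b_2|<\htt|b'|$ used in the proof of Lemma~\ref{lem:surjA1}, applied in both tensor factors, and the corresponding statement for lowest weight modules). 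With $\Psi$ an $\A$-linear (in the twisted sense) involution that is upper-triangular-unipotent on the $\A$-lattice, parts (1) and (2) follow from the standard elementary lemma on bar-invariant bases (\cite[24.2.1]{Lu93} or \cite[7.10]{Lu93}): one solves for $b\diamondsuit b'=b\otimes b'+\sum_{(b_2,b_2')<(b,b')}\pi_{(b_2,b_2'),(b,b')}\,b_2\otimes b_2'$ with $\pi\in q^{-1}\Z[q^{-1}]$ recursively down the (locally finite) poset, uniqueness being automatic from the $\mc L\cap\overline{\mc L}=0$-type argument. Part (3) is then formal: $\{b\diamondsuit b'\}$ is obtained from $\{b\otimes b'\}$ by a unipotent upper-triangular change of basis over $\Z[q^{-1}]$ (respectively over $\A$, over $\Qq$ after extending scalars), so it is simultaneously a $\Z[q^{-1}]$-basis of $\mc L$, an $\A$-basis of $\A\otimes_{\Z[q^{-1}]}\mc L$, and a $\Qq$-basis of $M\otimes M'$.

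The main obstacle is not any single computation but ensuring that the infinite sums are all under control in the Kac--Moody setting, where $M$ and $M'$ need not be finite-dimensional: one must know that for each fixed weight the set of $(b,b')$ of that weight is finite (so the poset is locally finite and the recursion terminates), that $\Theta(b\otimes b')$ is a finite sum (Remark~\ref{rem:well}), and that $\Psi$ genuinely lands in $\mc L$ rather than a completion. All three are guaranteed precisely by the hypothesis $M=\dL(\la)$ or $M'=L(\la)$: the integrability of $L(\la)$ forces the weight multiplicities in each tensor-product weight space to be finite, and condition $(\star)$ of Remark~\ref{rem:well} holds. Thus the only real work beyond bookkeeping is to transcribe Lusztig's triangularity estimates for $\Delta$ on $\B$ (and their $\omega$-twisted counterparts), which we have effectively already done in Lemma~\ref{lem:surjA1}; the rest of \cite[Chapter 27]{Lu93} then applies verbatim.
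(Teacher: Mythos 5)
Your proposal is correct and follows essentially the same route as the paper: the paper's proof likewise consists of citing Remark~\ref{rem:well} for the well-definedness of $\Theta$, Propositions~\ref{prop:ThetaAM1} and \ref{prop:ThetaAM2} for preservation of $\aM\otimes_\A\aM'$, and then observing that the standard argument for \cite[Theorem 27.3.2]{Lu93} goes through. Your additional spelling-out of the triangularity of $\Psi$ and the recursive construction is exactly the content of that standard argument, which the paper leaves to the citation.
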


\begin{proof}
By Remark~\ref{rem:well}, the map $\Theta: M \otimes M' \rightarrow M \otimes M'$ is well defined.
By Propositions~\ref{prop:ThetaAM1} and \ref{prop:ThetaAM2}, the map $\Theta$ preserves
the $\A$-submodule $\aM \otimes \aM'$.
Now the standard proof for  \cite[Theorem 27.3.2]{Lu93} goes through.
\end{proof}

\begin{rem}
\label{rem:opposite}
One can show that no variant of the quasi-$\mathcal R$-matrix exists
in a completion of $\U^-\otimes \U^+$ (instead of $\U^+ \otimes \U^-$ as in \cite{Lu93}) which intertwines $\Delta$ and $\overline{\Delta}$.
Hence we cannot define a bar involution on $L(\la) \otimes \dL(\mu)$ (or more general tensor products
of highest and lowest weight integrable modules in arbitrary order) via a quasi-$\mathcal R$-matrix.

If one uses the opposite coproduct from the one in \cite[3.1.4]{Lu93} then there is a version of quasi-$\mathcal R$-matrix
in a completion of $\U^-\otimes \U^+$ which goes with it. This gives rise to 
canonical bases of modules of the form $M \otimes \dL(\la)$ and $L(\la) \otimes M$ 
(with the $\U$-module structure  given by the opposite coproduct), for any based module $M$. 
But   this is just a reformulation of the constructions in this paper. 
\end{rem}

Let $(M,B), (M',B'),  (M'',B'')$ be based modules. We shall assume that 
the maps $\Theta$ on $M\otimes  M'$, $(M\otimes M') \otimes M''$, $M'\otimes M''$, and $M\otimes (M'\otimes M'')$
are all well defined (cf. Remark~\ref{rem:well}) and that they preserve the corresponding $\A$-submodules. 
Then in the same way as in \cite[27.3.6]{Lu93}, one shows that
$(M\otimes M') \otimes M''$ and $M\otimes (M'\otimes M'')$ are based modules and that
the associativity  $(M\otimes M') \otimes M'' \cong M\otimes (M'\otimes M'')$ (as based modules) holds, 
which allows us to write $M\otimes M' \otimes M''$.
This readily implies the analogous associativity result for more than three tensor factors. 

Let $r, \ell$ be integers with $0\le r \le \ell$.
Let $\la_1,   \ldots, \la_\ell \in X^+$ and  ${\underline{\la}} =(\la_1, \ldots, \la_\ell; r)$.
We shall consider the tensor product $\U$-module 
\begin{equation}
 \label{eq:Tlh}
\mathbb T^{\underline{\la}} =
\dL(\la_1) \otimes \ldots \otimes \dL(\la_r)\otimes L(\la_{r+1})\otimes \ldots \otimes L(\la_\ell).
\end{equation}
Let $\B_i$ denote  the  canonical basis of $\dL(\la_i)$ for each $1\le i\le r$ and of $L(\la_i)$ for $r<i\le \ell$.  
Let $\mc L^{\underline{\la}}$ be the $\Z[q^{-1}]$-submodule of $\mathbb T^{\underline{\la}}$
generated by $\B_1 \otimes \ldots \otimes \B_\ell$. 
By Remark~\ref{rem:well} and applying Theorem~\ref{th:CB12} inductively, we have established
the following generalization of Lusztig's result from finite type to Kac-Moody type. 

\begin{thm}
\label{th:CBlwhw}
\begin{enumerate}
\item
For any $(b_1, \ldots, b_\ell)\in \B_1 \times \ldots \times \B_\ell$, 
there is a unique element $b_1\diamondsuit \ldots \diamondsuit b_\ell \in \mc L^{\underline{\la}}$ such that 
$$
{ \quad }\quad
 \Psi(b_1\diamondsuit \ldots \diamondsuit b_\ell)
 =b_1\diamondsuit \ldots \diamondsuit b_\ell
\quad \text{and} \quad
  b_1\diamondsuit \ldots \diamondsuit b_\ell -b_1 \otimes \ldots \otimes b_\ell  \in q^{-1} \mc L^{\underline{\la}}.
 $$

\item
We have
$b_1\diamondsuit \ldots \diamondsuit b_\ell =b_1\otimes \ldots \otimes b_\ell 
+ \sum_{b_1',\ldots, b_\ell'} c_{b_1',\ldots, b_\ell'}^{b_1,\ldots, b_\ell} b_1'\otimes \ldots \otimes b_\ell',
$
with $(b_1',\ldots, b_\ell') \neq  (b_1,\ldots, b_\ell)$ and 
$c_{b_1',\ldots, b_\ell'}^{b_1,\ldots, b_\ell} \in q^{-1} \Z[q^{-1}]$. 

\item
The elements $b_1\diamondsuit \ldots \diamondsuit b_\ell$ with $(b_1, \ldots, b_\ell)\in \B_1 \times \ldots \times \B_\ell$
form a $\Qq$-basis of $\mathbb T^{\underline{\la}}$ in \eqref{eq:Tlh},
an $\A$-basis of $\A \otimes_{\Z[q^{-1}]} \mc L^{\underline{\la}}$, and a $\Z[q^{-1}]$-basis of $\mc L^{\underline{\la}}$. 

\item
The natural homomorphism $\mc L^{\underline{\la}} \cap \Psi(\mc L^{\underline{\la}})
\rightarrow \mc L^{\underline{\la}} /q^{-1} \mc L^{\underline{\la}}$
is an isomorphism. 
\end{enumerate}
\end{thm}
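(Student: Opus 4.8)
The plan is to reduce Theorem~\ref{th:CBlwhw} to an iterated application of Theorem~\ref{th:CB12}, checking at each stage that the hypotheses of Theorem~\ref{th:CB12} are met. First I would set up the induction on $\ell$, the number of tensor factors, with the base case $\ell = 1$ being the statement that $\dL(\la_1)$ or $L(\la_1)$ is itself a based module, which is \cite{Lu90, Ka91, Lu93}. For the inductive step, write $\mathbb T^{\underline{\la}} = \mathbb T' \otimes L(\la_\ell)$ when $r < \ell$ (respectively $\mathbb T^{\underline{\la}} = \dL(\la_1) \otimes \mathbb T''$ when $r = \ell$), where $\mathbb T'$ (respectively $\mathbb T''$) is a tensor product of one fewer factor. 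By the inductive hypothesis applied to part (3), $\mathbb T'$ is a based module with basis given by the elements $b_1 \diamondsuit \cdots \diamondsuit b_{\ell-1}$, and similarly for $\mathbb T''$.

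Next I would verify that the pair $(\mathbb T', L(\la_\ell))$ (or $(\dL(\la_1), \mathbb T'')$) satisfies the hypotheses needed to invoke Theorem~\ref{th:CB12}: namely that $\Theta$ on the relevant tensor product is well defined and preserves the $\A$-form. For $\mathbb T' \otimes L(\la_\ell)$, the second tensor factor is $L(\la_\ell)$ with $\la_\ell \in X^+$, so Remark~\ref{rem:well} guarantees $\Theta$ is well defined, and Proposition~\ref{prop:ThetaAM1} guarantees it preserves $\aA(\mathbb T') \otimes_\A \aA L(\la_\ell)$, since $\mathbb T'$ is a based module by induction. Symmetrically, for $\dL(\la_1) \otimes \mathbb T''$, Remark~\ref{rem:well} and Proposition~\ref{prop:ThetaAM2} apply with $M = \mathbb T''$. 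Hence Theorem~\ref{th:CB12} applies and produces the elements $(b_1 \diamondsuit \cdots \diamondsuit b_{\ell-1}) \diamondsuit b_\ell$ (or $b_1 \diamondsuit (b_2 \diamondsuit \cdots \diamondsuit b_\ell)$), which by the associativity discussion preceding \eqref{eq:Tlh} agree and may unambiguously be written $b_1 \diamondsuit \cdots \diamondsuit b_\ell$. Parts (1) and (3) of the theorem then follow directly from parts (1) and (3) of Theorem~\ref{th:CB12} applied at the last step, together with the inductive description of the basis of $\mathbb T'$.

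For part (2), I would argue that the triangularity statement of Theorem~\ref{th:CB12}(2) — that $b_1 \diamondsuit \cdots \diamondsuit b_\ell$ equals $(b_1 \diamondsuit \cdots \diamondsuit b_{\ell-1}) \otimes b_\ell$ plus a $q^{-1}\Z[q^{-1}]$-combination of strictly lower terms in the partial order on $(B_{\mathbb T'}, B_\ell)$ — combines with the inductive expansion of $b_1 \diamondsuit \cdots \diamondsuit b_{\ell-1}$ in terms of the $b_1' \otimes \cdots \otimes b_{\ell-1}'$ to yield the stated expansion of $b_1 \diamondsuit \cdots \diamondsuit b_\ell$ in the standard monomial basis $b_1' \otimes \cdots \otimes b_\ell'$, with all coefficients for $(b_1', \ldots, b_\ell') \neq (b_1, \ldots, b_\ell)$ lying in $q^{-1}\Z[q^{-1}]$; here one just needs that a product of a unipotent lower-triangular matrix over $q^{-1}\Z[q^{-1}]$ with another stays unipotent lower-triangular. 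Part (4) is the general abstract fact that for any based module the natural map $\mc L \cap \Psi(\mc L) \to \mc L / q^{-1}\mc L$ is an isomorphism (\cite[24.2.1, 27.1.2]{Lu93}), which holds for $\mathbb T^{\underline{\la}}$ once we know it is a based module, established in the inductive step.

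The main obstacle is not conceptual but bookkeeping: one must be careful that the partial orders used at each stage of the induction are compatible, so that the triangularity in part (2) really does refer to a single coherent partial order on $\B_1 \times \cdots \times \B_\ell$ (or at least that the composed triangular change of basis is still unipotent and lower-triangular for the componentwise order), and that the associativity isomorphism of based modules is genuinely identifying the $\diamondsuit$-bases computed by bracketing the tensor product in different ways. Both points are handled by the associativity discussion already provided in the excerpt following Theorem~\ref{th:CB12}, so in the end the proof is a clean induction with all the real work already done in Theorem~\ref{th:CB12} and Propositions~\ref{prop:ThetaAM1}--\ref{prop:ThetaAM2}.
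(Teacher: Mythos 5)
Your proposal is correct and follows essentially the same route as the paper, whose proof is precisely an inductive application of Theorem~\ref{th:CB12} (peeling off one factor at a time), with well-definedness of $\Theta$ supplied by Remark~\ref{rem:well}, preservation of the $\A$-forms by Propositions~\ref{prop:ThetaAM1} and \ref{prop:ThetaAM2}, and independence of bracketing by the associativity discussion preceding \eqref{eq:Tlh}. The only difference is that you work harder than necessary on part (2): since $\mc L^{\underline{\la}}$ is free over $\Z[q^{-1}]$ on the pure tensors, part (2) is an immediate restatement of part (1) and no composition of triangular matrices is needed.
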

Following Lusztig, we call the basis in this theorem  the {\em canonical basis} of the tensor product module $\mathbb T^{\underline{\la}}$.
When $\ell=2$ and $r=1$, $\mathbb T^{\underline{\la}} =\dL(\la_1)\otimes L(\la_2)$, and the theorem reduces to 
\cite[Theorem 24.3.3]{Lu93}. Otherwise, the theorem is  new for $\U$ of infinite type with $\ell\ge 2$. 

\begin{rem}
 \label{rem:basedO}
Let $\mathcal O$ be the BGG category of $\U$-modules, and let $\Oi$ be the full subcategory of $\mathcal O$
of integrable $\U$-modules of finite-dimensional weight spaces. 
We consider the category $\Oi_{\text{b}}$ of based modules $(M,B)$ with $M \in \Oi$; a basic example is $L(\la)$ for $\la \in X^+$ with its 
canonical basis. 
The properties of based modules in \cite[27.1.1-27.1.8; 27.2.1-27.2.2]{Lu93} remain  valid in $\Oi_{\text{b}}$, 
where the argument in 27.1.8  ``by induction on $\dim M$" can be easily
modified to be ``by induction on $\dim M[\ge \la]^{hi}$ (where $M[\ge \la]^{hi}$ denotes 
the space of highest weight vectors of weight $\ge \la$)".
The counterparts of \cite[27.3.1-27.3.2, 27.3.6]{Lu93}  have already been addressed above, and
\cite[27.3.5]{Lu93} remains valid. 
\end{rem}

One noteworthy consequence of Remark~\ref{rem:basedO} (cf. \cite[27.1.7]{Lu93}) is the following
(which has applications in particular for $\U$ of affine type).

\begin{prop}
Let $\la_1, \ldots, \la_\ell \in X^+$. Let $\eta_i$ (and $\eta$, respectively)
denote the highest weight vector of $L(\la_i)$ for each $i$
(and of $L(\sum_{i=1}^\ell \la_i)$, respectively).
Then the (unique) homomorphism of $\U$-modules 
$$
\chi: L\Big(\sum_{i=1}^\ell \la_i \Big) \longrightarrow L(\la_1) \otimes \ldots \otimes L(\la_\ell),
\qquad \chi(\eta) =\eta_1\otimes \ldots \otimes \eta_\ell
$$ 
sends each canonical basis element to a canonical basis element.  
\end{prop}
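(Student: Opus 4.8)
The plan is to realize $\chi$ as a morphism in the category $\Oi_{\text{b}}$ of based modules and then invoke the general fact (\cite[27.1.7]{Lu93}, valid in $\Oi_{\text{b}}$ by Remark~\ref{rem:basedO}) that a morphism of based modules sends each canonical basis element either to a canonical basis element or to $0$, together with the observation that $\chi$ is injective (being a nonzero map out of the simple module $L(\sum \la_i)$) so the zero case never occurs. Concretely, $L(\sum_{i=1}^\ell \la_i)$ carries its Lusztig--Kashiwara canonical basis and is a based module; the tensor product $L(\la_1)\otimes\ldots\otimes L(\la_\ell)$ is a based module by Theorem~\ref{th:CBlwhw} (the case $r=0$); and one must check that $\chi$, which is the unique $\U$-module map determined by $\chi(\eta)=\eta_1\otimes\ldots\otimes\eta_\ell$ (unique since $\eta$ generates the module and $\eta_1\otimes\ldots\otimes\eta_\ell$ is a highest weight vector of the correct weight killed by all $e_i$), is a morphism of based modules.

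The key point to verify is therefore that $\chi$ respects the bar involutions and the $\A$-forms, i.e. that it is a based map in the sense of \cite[27.1.1]{Lu93}. For the bar involutions: the canonical bar involution $\psi$ on $L(\sum\la_i)$ fixes $\eta$, while the tensor bar involution $\Psi$ on $L(\la_1)\otimes\ldots\otimes L(\la_\ell)$ satisfies $\Psi(\eta_1\otimes\ldots\otimes\eta_\ell)=\eta_1\otimes\ldots\otimes\eta_\ell$, because $\Theta$ acts as the identity on the tensor product of highest weight vectors (it lies in a completion of $\U^-\otimes\U^+$, and $\U^+_\nu$ with $\nu>0$ annihilates $\eta_i$). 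Since $\chi$ is $\U$-linear and both bar involutions are compatible with the $\U$-action in the sense recorded in \cite[Lemma 24.1.2]{Lu93} and its highest-weight-module analogue, and since they agree on the generator $\eta\mapsto\eta_1\otimes\ldots\otimes\eta_\ell$, we get $\chi\circ\psi=\Psi\circ\chi$ on all of $L(\sum\la_i)$. For the $\A$-form: $\chi$ maps the highest weight vector $\eta$, which lies in ${}_\A L(\sum\la_i)$, to $\eta_1\otimes\ldots\otimes\eta_\ell\in\mc L^{\underline\la}$ and also into $\A\otimes_{\Z[q^{-1}]}\mc L^{\underline\la}$; since ${}_\A\U^-\eta$ spans ${}_\A L(\sum\la_i)$ and $\mc L^{\underline\la}$ (resp. its $\A$-form) is stable under ${}_\A\U^-$, the image of the $\A$-form (resp. $\Z[q^{-1}]$-form) lands where it should. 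Thus $\chi$ is a based map.

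With $\chi$ established as a based map, Remark~\ref{rem:basedO}'s version of \cite[27.1.7]{Lu93} applies: for each canonical basis element $b$ of $L(\sum\la_i)$, $\chi(b)$ is either a canonical basis element of $L(\la_1)\otimes\ldots\otimes L(\la_\ell)$ or $0$. Finally, $L(\sum\la_i)$ is simple and $\chi\neq 0$, so $\ker\chi=0$; hence $\chi(b)\neq 0$ for every basis element $b$, and therefore $\chi(b)$ is always a canonical basis element. This completes the proof.

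I expect the main obstacle to be the careful bookkeeping that $\chi$ truly intertwines the two bar involutions --- one has to be slightly careful because the bar involution on the tensor product is defined via $\Theta$ and is only characterized by its compatibility with the $\U$-action plus its value on a generating highest weight vector; one must make sure the uniqueness clause in \cite[27.3.1]{Lu93}/\cite[24.1.1]{Lu93} for the tensor bar involution is phrased so as to pin it down from $\Psi(\eta_1\otimes\ldots\otimes\eta_\ell)=\eta_1\otimes\ldots\otimes\eta_\ell$ together with $\U$-semilinearity. Everything else (injectivity, existence and uniqueness of $\chi$, stability of $\A$-forms) is routine given the machinery already developed in the excerpt.
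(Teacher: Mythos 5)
Your overall strategy---put $\chi$ into the based-module framework of \cite[Ch.~27]{Lu93} as extended in Remark~\ref{rem:basedO}, and cite \cite[27.1.7]{Lu93}---is the same as the paper's (the paper gives no written proof beyond ``consequence of Remark~\ref{rem:basedO} (cf.\ 27.1.7)''). But your execution has a genuine gap: the principle you rely on, namely that a $\U$-linear map which intertwines the bar involutions and preserves the integral forms ``sends each canonical basis element either to a canonical basis element or to $0$,'' is not a theorem. In Lusztig's terminology a \emph{morphism of based modules} is \emph{defined} (27.1.3) by the property that it sends each element of $B$ into $B'\cup\{0\}$; it is not characterized by bar- and lattice-compatibility. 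What your verification actually yields is only this: $\chi(b)$ is a $\Psi$-invariant element of $\mc L^{\underline\la}$ (or of the $\A$-form), hence by Theorem~\ref{th:CBlwhw}(3)--(4) a $\Z$-linear combination of the elements $b_1\diamondsuit\ldots\diamondsuit b_\ell$. Nothing in your argument rules out $\chi(b)=b'+b''$ for two distinct canonical basis elements, which is exactly the point at issue. The real content of \cite[27.1.6--27.1.7]{Lu93} is the structure theory of based modules, which uses condition (d) of \cite[27.1.2]{Lu93} (near-orthonormality of $B$ with respect to the inner product, i.e.\ the crystal-basis condition) to show that $M[\ge\la]$ is spanned by $B\cap M[\ge\la]$ and that $M[\ge\la]/M[>\la]$ is a direct sum of copies of $(L(\la),B(\la))$ \emph{as based modules}. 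The proposition then follows by taking $M=\mathbb T^{\underline\la}$ and $\la=\sum_i\la_i$: this is the highest weight occurring, its weight space is one-dimensional and spanned by the canonical basis element $\eta_1\otimes\ldots\otimes\eta_\ell$, so $M[>\la]=0$ and $M[\ge\la]=\operatorname{im}\chi$ is a single copy of $L(\la)$ with matching canonical bases. You need to invoke this structural result (available in $\Oi_{\text{b}}$ by Remark~\ref{rem:basedO}), not a bar/lattice check.

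Two smaller points. First, your claim that $\mc L^{\underline\la}$ is stable under ${}_\A\U^-$ is false: the $\Z[q^{-1}]$-lattice is stable under the Kashiwara operators, while it is the $\A$-form $\A\otimes_{\Z[q^{-1}]}\mc L^{\underline\la}$ that is stable under $\UdotA$ (condition (b)); so your argument for lattice preservation does not work as stated. Second, the parts of your write-up that are correct and useful---$\chi\circ\psi=\Psi\circ\chi$ via $\U$-semilinearity of $\Psi$ and $\Psi(\eta_1\otimes\ldots\otimes\eta_\ell)=\eta_1\otimes\ldots\otimes\eta_\ell$, the injectivity of $\chi$, and the preservation of $\A$-forms---are indeed the easy ingredients; they show $\operatorname{im}\chi=\mathbb T^{\underline\la}[\ge\la]$ carries the restricted data, but the identification of that submodule as a \emph{based} submodule with basis $B\cap\operatorname{im}\chi$ is precisely what 27.1.7 supplies and what your proof is missing.
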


\subsection{Relation to categorification}

Building on the remarkable works of Zheng, Khovanov, Lauda, Rouquier and his own earlier work \cite{Z08, KL09, R08, We13}, 
Webster \cite{We12}   categorified  
tensor products of lowest and highest weight integrable modules {\em exactly} of the form \eqref{eq:Tlh}
(in particular the tensor products are not in arbitrary order as stated in the earlier versions of \cite{We12}; also compare with our Remark~\ref{rem:opposite}). 
Denote by $K_q^0(\mathcal X^{\underline{\la}})$ the Grothendieck group of Webster's category $\mathcal X^{\underline{\la}}$ \cite[Definition 5.2]{We12}. 
The basis consisting of the classes of principal indecomposable modules in $K_q^0(\mathcal X^{\underline{\la}})$ 
(called an {\em orthodox basis} in \cite{We12})
is called a {\em can$\tiny \oplus$nical basis} here (and read as positively canonical or canonical plus basis). 

Note that for $\U$ of infinite type it is still an open question whether the projective indecomposable modules
in Webster's category provide a categorification of the canonical basis of $\dL(\la) \otimes L(\la')$, for $\la, \la'\in X^+$,
even though the latter has been constructed in \cite{Lu93}. 
Nevertheless, 
in the case of tensor products of highest weight integrable modules,
combining 
\cite[
Propositions~ 7.6, 7.7, Theorem~8.8]{We12} 
 (where the hard work was done based on 
earlier works of  Vasserot, Varagnolo and Rouquier \cite{VV11, R12})
with our Theorem~\ref{th:CBlwhw} provides the following
theorem (which was known \cite{We12} in finite $ADE$ type).
Note that we get the strongest result out of combining the categorification and algebraic approaches in (3) below.

\begin{thm}  [UVA]
\label{th:PIM=CB}
Assume that 
$\U$ is  of symmetric type (i.e., the generalized Cartan matrix is symmetric).
Let $\la_1,\ldots, \la_\ell \in X^+$ and $\underline{\la} =( \la_1,\ldots, \la_\ell; 0)$.
\begin{enumerate}
\item
Under Webster's identification $K^0_q(\mathcal X^{\underline{\la}}) \cong \mathbb T^{\underline{\la}}$, the 
can$\tiny \oplus$nical basis in $K^0_q(\mathcal X^{\underline{\la}})$ coincides with the canonical basis of $\mathbb T^{\underline{\la}}$
in Theorem~\ref{th:CBlwhw}.

\item
The matrix coefficients for the action of any canonical basis element of $\Udot$ on $\mathbb T^{\underline{\la}}$, 
with respect to the canonical basis of $\mathbb T^{\underline{\la}}$, always lie in $\Z_{\ge 0}[q,q^{-1}]$. 

\item
The coefficients in Theorem~\ref{th:CBlwhw}(2) satisfy that $c_{b_1',\ldots, b_\ell'}^{b_1,\ldots, b_\ell} \in q^{-1} \Z_{\ge 0} [q^{-1}]$. 
\end{enumerate}
\end{thm}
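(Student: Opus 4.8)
The plan is to deduce Theorem~\ref{th:PIM=CB} by merging the categorical input of Webster with the algebraic construction of Theorem~\ref{th:CBlwhw}, rather than reproving either side. First I would recall Webster's identification $K^0_q(\mathcal X^{\underline{\la}}) \cong \mathbb T^{\underline{\la}}$ and the fact, established in \cite{We12} building on \cite{VV11, R12}, that under this identification the classes of the indecomposable projectives form a bar-invariant basis that is triangular (with respect to the natural order) against the ``standard'' basis $b_1 \otimes \ldots \otimes b_\ell$, with off-diagonal coefficients in $q^{-1}\Z_{\ge 0}[q^{-1}]$; moreover this can$\tiny\oplus$nical basis is characterized exactly by the same bar-invariance-plus-triangularity property that characterizes $b_1\diamondsuit\ldots\diamondsuit b_\ell$ in Theorem~\ref{th:CBlwhw}(1)--(2). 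The point of contact is that both bases live in $\mathcal L^{\underline{\la}}$, are fixed by $\Psi$, and reduce to $b_1\otimes\ldots\otimes b_\ell \pmod{q^{-1}\mathcal L^{\underline{\la}}}$. By the uniqueness clause in Theorem~\ref{th:CBlwhw}(1), the two bases coincide, which is part (1). Part (3) is then immediate: the coefficients $c^{b_1,\ldots,b_\ell}_{b_1',\ldots,b_\ell'}$ of Theorem~\ref{th:CBlwhw}(2), being the transition coefficients of the can$\tiny\oplus$nical basis, lie in $q^{-1}\Z_{\ge 0}[q^{-1}]$ by Webster's positivity (graded dimensions of $\mathrm{Hom}$-spaces between projectives and standards).

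For part (2), the plan is to transport the positivity of the $\Udot$-action through the same identification. Webster's category carries exact functors categorifying the action of the generators $\be_i, \bff_i$ (and the divided powers), so every element of the canonical basis of $\Udot$ acts on $K^0_q(\mathcal X^{\underline\la})$ by (an alternating, but ultimately) a functor whose effect on a projective is a genuine projective module, hence its class expands in the can$\tiny\oplus$nical basis with coefficients that are graded multiplicities, i.e. in $\Z_{\ge 0}[q,q^{-1}]$. The key cited inputs here are \cite[Propositions~7.6, 7.7, Theorem~8.8]{We12}, which identify these functors with the canonical basis action of $\Udot$; combined with part (1), the matrix of a canonical basis element of $\Udot$ with respect to $\{b_1\diamondsuit\ldots\diamondsuit b_\ell\}$ equals the matrix of the corresponding functor with respect to the projective basis, which is manifestly non-negative.

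The main obstacle is bookkeeping rather than new mathematics: one must verify that Webster's orthodox/can$\tiny\oplus$nical basis really satisfies the \emph{same} normalization and the \emph{same} partial order as the $\diamondsuit$-basis of Theorem~\ref{th:CBlwhw}, so that the uniqueness statement applies verbatim. This requires matching conventions --- Webster's grading shift versus our $q$, his ordering of tensor factors (now corrected to be exactly \eqref{eq:Tlh}, cf.~Remark~\ref{rem:opposite}), and the direction of the bar involution --- and checking that the restriction to symmetric type is what makes the relevant $2$-representation theory (via KLR algebras) available with the positivity of \cite{VV11, R12}. Once these identifications are pinned down, (1) is a one-line uniqueness argument, and (2) and (3) follow formally; in particular (3) is genuinely \emph{stronger} than what either \cite{We12} (outside finite $ADE$) or the purely algebraic Theorem~\ref{th:CBlwhw} gives on its own, which is the point emphasized before the statement.
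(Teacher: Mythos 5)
Your proposal is correct and follows essentially the same route as the paper, which proves this theorem precisely by combining Webster's results \cite[Propositions~7.6, 7.7, Theorem~8.8]{We12} with the uniqueness characterization in Theorem~\ref{th:CBlwhw}; your write-up simply makes explicit the uniqueness argument for (1) and the transport of positivity for (2) and (3) that the paper leaves implicit.
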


\begin{rem}
A counterpart of Theorem~\ref{th:PIM=CB}(1)(2) is also valid in the framework of Zheng \cite[Theorems~3.3.5, 3.3.6]{Z08}, who established this already
for finite $ADE$ type.
\end{rem}

We conjecture that the statements in Theorem~\ref{th:PIM=CB} hold also for $\underline{\la} =( \la_1,\ldots, \la_\ell; r)$, where $1<r<\ell$, 
when $\U$ is of symmetric and infinite type.

\begin{rem} \label{rem:affine}
Recall that $\Theta$ admits an integral expansion with respect to the canonical basis in finite type \cite[Corollary 24.1.6]{Lu93}. 
It is  natural to ask if such an integral expansion property of $\Theta$ holds in Kac-Moody setting in light of the integrality results 
in Propositions~\ref{prop:ThetaAM1} and \ref{prop:ThetaAM2}. However, as we learned from M.~Kashiwara, such an
integral expansion of $\Theta$ with respect to the canonical/global crystal basis no longer holds in affine type $A_1^{(1)}$. 
\end{rem}

\vspace{.2cm}

\noindent {\bf Acknowledgement.} 
The second author is partially supported by the NSF grants DMS-1101268 and DMS-1405131. 
We thank M.~Kashiwara for Remark~\ref{rem:affine}. 
We also thank the referee for helpful suggestions in improving the exposition. 

\red{
  
}


\end{document}